\newtheorem{assum}{Assumption}
\newtheorem{definition}{Definition}
\newtheorem{lemma}{Lemma}
\newtheorem{thm}{Theorem}
\newtheorem{prop}{Proposition}
\newtheorem{cor}{Corollary}
\title{\LARGE \bf
Ride-Sharing Networks with Mixed Autonomy
}
\author{Qinshuang Wei, Jorge Alberto Rodriguez, Ramtin Pedarsani and Samuel Coogan
\thanks{Q. Wei, J. A. Rodriguez, and S. Coogan are with the School of Electrical and Computer Engineering, Georgia Institute of Technology, 
        \texttt{\small \{qinshuang, alberto.rod, sam.coogan\}@gatech.edu}. 
        S. Coogan is also with the School of Civil and Environmental Engineering, Georgia Institute of Technology. 
        R. Pedarsani is with the Department of Electrical Engineering, University of California, Santa Barbara, {\tt\small ramtin@ece.ucsb.edu}. This work was funded in part by the National Science Foundation under grants 1736582 and 1749357.}%
}
\begin{document}

\maketitle
\thispagestyle{empty}
\pagestyle{empty}

\begin{abstract}
We consider ride-sharing networks served by human-driven vehicles and autonomous vehicles. First, we propose a novel model for ride-sharing in this mixed autonomy setting for a multi-location network in which the platform sets prices for riders, compensation for drivers, and operates autonomous vehicles for a fixed price. Then we study the possible benefits, in the form of increased profits, to the ride-sharing platform that are possible by introducing autonomous vehicles. We first establish a nonconvex optimization problem characterizing the optimal profits for a network operating at a steady-state equilibrium and then propose a convex problem with the same optimal profits that allows for efficient computation. Next, we study the relative mix of autonomous and human-driven vehicles that results at equilibrium for various costs of operation for autonomous vehicles. In particular, we show that there is a regime for which the platform will choose to mix autonomous and human-driven vehicles in order to optimize profits. Our results  provide insights into how such ride-sharing platforms might choose to integrate autonomous vehicles into their fleet.

\end{abstract}

\section{Introduction}



Ride-sharing platforms, also known as transportation network companies, match passengers or riders with drivers using websites or mobile apps and have become ubiquitous in major cities across the world \cite{Mitchell2010, Board2016}. The rise of ride-sharing platforms coincides with a decline in private car ownership due to high costs, lack of parking, and persistent traffic congestion \cite{Hass-Klau2007, Mitchell2010, Javid2016, Mcbain2018}. 
Traditionally, rides are provided by drivers who use their own personal vehicle to provide service. 


However, ride-sharing platforms have indicated that they intend to incorporate autonomous vehicles (AVs) into their fleet in the near future \cite{Litman2017}. 
AVs managed by the ride-sharing platform can be directed to specific locations as needed. However, owning and managing an AV fleet can be costly for the ride-sharing platform, and significant technological hurdles remain before these platforms could transition to 100\% autonomous fleets. For these reasons, it is likely that ride-sharing platforms will, at least initially, adopt a hybrid or \emph{mixed} framework in which AVs operate along-side conventional, human-driven vehicles \cite{Boesch2016}. For example, the ride-sharing platform Lyft has partnered with Aptiv, a manufacturer of autonomous cars, to serve some of its ride requests in Las Vegas, Nevada with AVs \cite{Conger:2018ad}.  In this setting, the AVs can be deployed to serve, for instance, locations with abnormally high demand.

Research in ride-sharing platforms has largely focused on two ends of the autonomy spectrum. On one end, models of rider and driver behavior in conventional ride-sharing markets with no AVs have only appeared in the literature relatively recently \cite {Bimpikis2018, Banerjee2015, Cachon2017, Banerjee2016}. A common approach in these works is to conside ride-sharing as a two-sided market with passengers willing to pay for rides and drivers willing to provide rides for compensation.  On the other end, futuristic \emph{mobility-on-demand} systems consisting of only AVs have also been proposed and studied \cite{Zhang2015, Rigole2014, Zhang2015a, Zhang2016}. These works focus on controlling the movement of AVs to achieve objectives such as maximum throughput.

In this paper, we consider the transition from traditional ride-sharing networks to totally automated mobility-on-demand systems. We propose and study a model for  this mixed autonomy setting with both AVs and human-driven vehicles that is an extension of the model for traditional ride-sharing recently introduced in \cite{Bimpikis2018}. In the proposed model, the network consists of multiple equidistant locations, and at each time-step, potential riders arrive at these locations with desired destinations. The ride-sharing platform sets prices for riders and compensation to drivers in order to incentivize both riders and drivers to use the platform. In addition, the platform has the option to use AVs for a fixed cost. If human drivers and AVs are both present at a location when a rider requests a ride, in this work, we make the assumption that the platform exhausts the human drivers before using AVs. This assumption is motivated by the fact that, all else equal, the platform prefers to favor human drivers in order to encourage their continued engagement in the platform.

Here, as in \cite{Bimpikis2018}, we focus on the equilibrium conditions that arise in the resulting mixed autonomy network when the platform seeks to maximize profits, and we pose these conditions as a nonconvex optimization problem. We then provide an alternative convex problem from which an optimal solution to the original problem can be recovered. Next, we study the relative mix of autonomous and human-driven vehicles that results at equilibrium for various costs of operation for AVs. Our results support the intuition that, when the cost of operating AVs is high, the platform will not utilize them in its fleet, and when the cost is low, the platform will use only AVs. Perhaps surprisingly, we also show that there is a regime for which the platform will choose to mix autonomous and human-driven vehicles in order to maximize profits, thus providing insights into how such ride-sharing platforms might choose to integrate AVs into their fleet.

The remainder of this paper is organized as follows. Section \ref{sec:model} provides the model definition, and Section \ref{sec:optimization-problem} poses the problem of profit maximization as a nonconvex optimization problem. Section \ref{sec:alt-orig-optim} provides an alternative convex optimization problem that provides the same optimal profits. In Section \ref{sec:lower-bound-using}, we investigate a particular class of networks and provide a theoretical upper bound on the ratio of driver's compensation to the cost of AVs above which no AVs are used at the profit-maximizing equilibrium. Section \ref{sec:numerical-study} provides a numerical study further illuminating the implications of the proposed model, and concluding remarks are in Section \ref{sec:conclusions}.

\section{Problem Formulation}
\label{sec:model}
We consider an infinite horizon discrete time model of a ride-sharing network that extends the model recently proposed in \cite{Bimpikis2018} to accommodate a mixed autonomy setting with AVs and human-driven vehicles. The network is assumed to consist of  $n$ locations that have the same distance between each other. The network operator or \emph{platform} determines prices for rides and compensations to drivers within the network. We briefly summarize some of the key features and findings of the model introduced in \cite{Bimpikis2018} before providing the mathematical details of our extended model for mixed autonomy.



\begin{enumerate}
\item Drivers decide whether, when, and where to provide service so as to maximize their expected lifetime earnings. In particular, drivers have an outside option  with known earnings and only participate in the platform if compensations are such that in expectation their equilibrium lifetime earnings are at least this outside option.

\item 
The demand pattern of riders is stationary. As such, the analysis focuses on the equilibrium outcome determined by the platform's prices. This is reasonable for ride-sharing systems that involve a large number of drivers and riders.

\item The price of a ride may differ among locations, but does not depend on the desired destination of each rider. This is reasonable when all locations are equidistant, as is the case here.
\end{enumerate}

With these considerations in mind, we are interested in studying the potential benefits of adding AVs to the network to maximize the profit potential for the platform. 

\subsection{Model Definition}

We now formalize the mixed autonomous ride-sharing network described above. 

\noindent The model consists of the following parameters:
\begin{itemize}
\item $n$, number of equidistant locations

\item $\theta_i$, mass of potential riders that arrive at location $i$ in each period of time

\item $\alpha_{ij}$, fraction of riders at location $i$ who wish to go to location $j$ satisfying $\sum_{j} \alpha_{ij} = 1$ for all $i$. We assume $\alpha_{ii}=0$ for all $i$ and construct the $n$-by-$n$ adjacency matrix $\mathbf{A}$ as $[\mathbf{A}]_{ij} = \alpha_{ij}$ where $[\mathbf{A}]_{ij} $ denotes the $ij$-th entry of $\mathbf{A}$.


\item $(1-\beta)$, probability that a driver exits the platform after completing a ride, where  $\beta \in(0,1)$. Thus, drivers remain in the network and provide service for a finite amount of time, and a driver's expected lifetime in the network is $(1-\beta)^{-1}$.

\item $\omega$, outside option earnings for the driver's lifetime. Drivers only participate if the compensations $c_i$ provided by the platform are such that in expectation their equilibrium lifetime earnings are at least $\omega$.

\item $s$, the cost to the platform of operating an AV for the equivalent time of a driver's expected lifetime.


\item $k= \frac{s}{\omega}$, the ratio of the cost of operating an AV for the equivalent time of a driver's expected lifetime to the outside option earnings. 

\end{itemize}

With these parameters, we then introduce:
\begin{itemize}

\item $p_i$, the price set by the platform for a ride from location $i$.

\item $c_i$, the corresponding compensation for the driver for providing a ride at location $i$.

\item $F(\cdot)$, the continuous cumulative distribution of the riders' willingness to pay with the support $[0,\bar{p}]$. 
When confronted with a price $p$ for a ride, a fraction $1-F(p)$ of riders will accept this price, and the remaining $F(p)$ fraction will balk and leave the network without taking a ride. Note that $\theta_i (1-F(p_i ))$ is the effective demand for rides at location $i$.


\end{itemize}

In addition, we make the following assumption throughout the paper.

\begin{assum}
\label{assum:1}
\ 

\begin{itemize}
\item The network’s demand pattern is stationary with $\theta_i > 0$ for all $i\in\{1,\ldots, n\}$, i.e., $\left(\mathbf{A},\theta \right)$ is fixed for all time.
\item The directed graph defined by adjacency matrix $\mathbf{A}$ is strongly connected.
\end{itemize}
\end{assum}

In summary, the system consists of a \emph{platform} that sets prices, \emph{riders} that request rides among locations, \emph{drivers} who seek to maximize their compensation within the system, and \emph{AVs} 
managed by the platform alongside the drivers.


\subsection{Equilibrium Definition}
Next, we characterize the equilibrium conditions that  are induced by the stationary demand as characterized in Assumption \ref{assum:1} and by fixed prices and compensations set by the platform. An equilibrium for the system is a time-invariant distribution of the mass of riders, drivers, and AVs at each location, as formalized below.


Let $x_i$ denote the mass of drivers at location $i$. If there are fewer riders than drivers at a location, drivers can relocate to another location to provide service in the next time period. For each $j,k\in\{1,\ldots, n\}$, let $y_{jk}$ denote the drivers who do not get a ride at location $j$ and choose to relocate to location $k$ for the next period. It follows that
\begin{equation}
  \label{eq:1}
\sum_{k=1}^n y_{jk}= \max\left\{ x_j - \theta_j (1-F(p_j)), 0  \right\}  .
\end{equation}
Moreover, $ \sum_{j} y_{ji}$ is the mass of drivers who do not get a ride to any other location and choose to relocate to $i$. Further, let $\delta_i$ denote the mass of new drivers who choose to enter the platform and provide service at location $i$ at each time step. At equilibrium, it must hold that
\begin{equation}
  \label{eq:2}
x_i=\beta \left[\sum_{j=1}^n \alpha_{ji} \min \left\{ x_j , \theta_j(1-F(p_j)) \right\} + \sum_{j=1}^n y_{ji} \right] + \delta_i.
\end{equation}
In \eqref{eq:2}, observe that $\min \left\{ x_j , \theta_j(1-F(p_j)) \right\}$ is the total demand the platform serves with human-driven cars at location $j$, and therefore  $\sum_{j=1}^n \alpha_{ji} \min \left\{ x_j , \theta_j(1-F(p_j)) \right\}$ is the mass of drivers who find themselves located at $i$ after completing a ride. Recall that a fraction $\beta$ of drivers choose to stay in the network after each time step.


When the demand $\min \left\{ x_i, \theta_i(1-F(p_i)) \right\}$ at location $i$ exceeds the mass of available drivers $x_i$, the platform can choose to use AVs to meet this extra demand. Let $z_i$ denote the mass of AVs at location $i$, and for each $j,k\in\{1,\ldots,n\}$, let $r_{jk}$ denote the AVs which do not get a ride at $j$ and are relocated to location $k$. 
Then
\begin{align}
\nonumber   z_i=&\sum_{j=1}^n \alpha_{ji} \min \left\{ z_j , \max \left\{\theta_j(1-F(p_j))-x_j , 0\right\} \right\} \\
  \label{eq:5}&+ \sum_{j=1}^n r_{ji}.
\end{align}
In \eqref{eq:5}, observe that $\min \left\{ z_j , \max \left\{\theta_j(1-F(p_j))-x_j , 0\right\} \right\}$ is the total demand that the platform serves with AVs at location $j$ so that $\sum_{j=1}^n \alpha_{ji} \min \left\{ z_j , \max \left\{\theta_j(1-F(p_j))-x_j , 0\right\} \right\}$  is the mass of AVs which are located at $i$ after completing a ride. Moreover, $\sum_{j=1}^n r_{ji}$  is the mass of AVs which do not get a ride to any other location and are relocated to location $i$, and it follows that
\begin{equation}
\label{eq:6}
\sum_{k=1}^n r_{jk}= \max \left\{ z_j - \max \left\{ \theta_j (1-F(p_j))-x_j, 0  \right\}, 0 \right\} .
\end{equation}

Here, we assume that AVs are in continual use and do not leave the platform. 
Furthermore, as discussed in the Introduction, we assume human-drivers have priority in serving demand so that AVs are only used to meet demand that exceeds the mass of drivers. This is because, in a mixed autonomy setting, we assume the platform prefers to favor human drivers in order to encourage their continued engagement in the platform. 

Let $V_i$ denote the  expected earnings for a driver at location $i$ so that
\begin{align}
\nonumber V_i &= \min \left\{\frac{\theta_i (1-F(p_i))}{x_i}, 1 \right\}(c_i + \sum_{k=1}^n\alpha_{ik}\beta V_{k}) \\
 \label{eq:17} &+ \left(1-\min \left\{\frac{\theta_i (1-F(p_i))}{x_i}, 1 \right\}\right)\beta\max_j V_j.
\end{align}

Since drivers will only enter the platform if $V_i\geq \omega$, \emph{i.e.}, the expected earnings exceed the drivers' outside option, the platform will choose compensation such that $V_i=\omega$.

\begin{definition}
For some prices and compensations $\{p_i,c_i\}_{i=1}^n$, the collection $\left\{\delta_i,x_i,y_{ij},z_i,r_{ij} \right\}_{i,j=1}^n$ is \emph{an equilibrium under $\{p_i,c_i\}_{i=1}^n$} if \eqref{eq:1}--\eqref{eq:6} is satisfied and $V_i=\omega$ for all $i= 1, .., n$.
\end{definition}



\section{Profit-Maximization Optimization Problem}
\label{sec:optimization-problem}
We now consider the problem of maximizing profits at equilibrium. Maximizing the aggregate profit rate across the $n$ locations subject to the system’s equilibrium constraints yields the following optimization problem:
\begin{align}
\nonumber \max_{\{p_i,c_i\}_{i=1}^n}&\sum_{i=1}^n \left[ \min \left\{x_i+z_i, \theta_i (1-F(p_i)) \right\} \cdot p_i \right. \\
\nonumber &\left. - \min \left\{x_i,\theta_i (1-F(p_i)) \right\} \cdot c_i - z_i\cdot s\right]  \\
\nonumber \text{s.t.} &\left\{\delta_i,x_i,y_{ij},z_i,r_{ij} \right\}_{i,j=1}^n \text{ is an equilibrium}\\
  \label{eq:3} &\text{ under } \{p_i,c_i\}_{i=1}^n.
\end{align}
Next, we propose an equivalent optimization problem, 
followed by a lemma establishing the equivalence. 
To this end, consider 
\begin{align}
\nonumber \max_{\{p_i,\delta_i,x_i,y_{ij},z_i,r_{ij}\}}&\sum_{i=1}^n p_i \theta_i (1-F(p_i)) - \omega\sum_{i=1}^n \delta_i - s\sum_{i=1}^n z_i\\
\nonumber \text{s.t.}\quad d_i=&\theta_i (1-F(p_i))  \\
\nonumber  x_i=&\beta \left[\sum_{j=1}^n \alpha_{ji} \min \left\{ x_j , d_j \right\} + \sum_{j=1}^n y_{ji} \right] + \delta_i\\
\nonumber \sum_{j=1}^n y_{ij}=& \max\left\{ x_i - d_i, 0  \right\}\\
\nonumber  z_i=&\sum_{j=1}^n \alpha_{ji} \max \left\{d_j-x_j , 0\right\} + \sum_{j=1}^n r_{ji}\\
\nonumber \sum_{j=1}^n r_{ij}=&z_i -\max \left\{d_i-x_i , 0\right\}  \\
  \label{eq:4} p_i,&\delta_i,z_i,x_i,y_{ij},r_{ij}\geq 0 \qquad \forall i,j.
\end{align}

\begin{lemma} 
\label{lem:1}
Consider optimization problem \eqref{eq:4}. 
\begin{enumerate}
\item The optimal value computed from \eqref{eq:4} is an upper bound on the optimal profits computed via \eqref{eq:3}; thus it provides an upper bound on the profits generated by the platform with prices depending on the origin of a ride.

\item If $\left\{p_i, \delta_i, x_i, y_{ij}, z_i, r_{ij} \right\}_{i,j=1}^n$ is a feasible solution for \eqref{eq:4} such that $d_i > 0$ for all $i$, \emph{i.e.}, some riders are served at all locations, then there exist compensations $\left\{c_i \right\}_{i=1}^n$ such that the tuple $\left\{\delta_i, x_i, y_{ij}, z_i, r_{ij} \right\}_{i,j=1}^n$ constitutes an equilibrium under $\left\{p_i, c_i \right\}_{i=1}^n$. Furthermore, the cost incurred by the platform under these compensations per period is equal to $\omega\sum_{i=1}^n\delta_i$.

\item If, in addition, $(1-\beta)\omega< \bar{p}$, any optimal solution $\left\{p_i^*, \delta^*_i, x_i^*, y_{ij}^*, z_i^*, r_{ij}^* \right\}$ for \eqref{eq:4}, is such that $d_i^*>0$ for all $i$. Conversely, if $(1-\beta)\omega\geq \bar{p}$, any optimal solution for \eqref{eq:4} is such that $\delta_i^*=d_i^*=z_i^*=0$ for all $i$.
\end{enumerate}
\end{lemma}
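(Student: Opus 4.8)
The plan is to reason entirely within the relaxed program \eqref{eq:4}, extracting a pair of aggregate identities from its equality constraints and combining them with the fact that the support of $F$ is $[0,\bar p]$. First I would sum the driver-balance constraint over all $i$; using $\sum_i\alpha_{ji}=1$ together with the relocation constraint $\sum_j y_{ij}=\max\{x_i-d_i,0\}$, the aggregated bracket collapses to $\sum_i x_i$, leaving the identity $\sum_i\delta_i=(1-\beta)\sum_i x_i$. This exposes $(1-\beta)\omega$ as the effective per-period cost of a unit of driver mass. From the AV constraints I would record $z_i\ge\max\{d_i-x_i,0\}$ (because $\sum_j r_{ij}\ge0$) and $d_i=\min\{x_i,d_i\}+\max\{d_i-x_i,0\}\le x_i+z_i$, the latter expressing that all demand $d_i$ is served by at most $x_i$ drivers and at most $z_i$ AVs. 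Finally, since $F(p)=1$ for $p\ge\bar p$, any location with $d_i>0$ has $p_i<\bar p$, so $p_i d_i\le\bar p\,d_i$ throughout.

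Combining these yields the master estimate
\[
\sum_i p_i d_i-\omega\sum_i\delta_i-s\sum_i z_i\;\le\;\bigl(\bar p-(1-\beta)\omega\bigr)\sum_i x_i+(\bar p-s)\sum_i z_i .
\]
For the converse claim, suppose $(1-\beta)\omega\ge\bar p$, so the first coefficient is nonpositive. The no-service point $p_i=\bar p$ with all remaining variables zero is feasible and attains objective value $0$, so the optimum is at least $0$ and the right-hand side must vanish at any optimizer. I would then show positive driver mass is never optimal: an idle driver ($x_i>0$, $d_i=0$) costs money through $\sum_i\delta_i=(1-\beta)\sum_i x_i$ while earning nothing, and a serving driver needs $p_i<\bar p\le(1-\beta)\omega$, i.e.\ per-ride revenue below per-ride cost; either way the objective strictly decreases, forcing $x_i^*=0$ and hence $\delta_i^*=0$ and $d_i^*\le x_i^*+z_i^*$. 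The remaining and, I expect, hardest step is to rule out profitable AV-only operation, i.e.\ to force $(\bar p-s)\sum_i z_i\le0$. Since the hypothesis $(1-\beta)\omega\ge\bar p$ constrains the driver economics rather than $s$ directly, I would look for the comparison $\bar p\le s$—that operating an AV is no cheaper per ride than the maximal willingness-to-pay—after accounting for AV relocation through $\sum_i z_i\ge\sum_i\max\{d_i-x_i,0\}$; this is the crux, because (as a balanced cyclic network illustrates) AVs can serve demand with no drivers present and with no deadheading. Once $d_i^*=0$ for all $i$ is secured, the AV constraints reduce to a revenue-free circulation and optimality immediately gives $z_i^*=0$.

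For the forward direction I would argue by contradiction and local improvement. Assume an optimal solution has $d_{i_0}^*=0$ for some $i_0$, i.e.\ $p_{i_0}^*\ge\bar p$. Using continuity of $F$ and $F(p)<1$ for $p<\bar p$, lower $p_{i_0}$ slightly to create a small served demand $\eta>0$ and serve it with human drivers. By $\sum_i\delta_i=(1-\beta)\sum_i x_i$, serving $\eta$ extra rides without creating idle drivers raises the driver cost by $(1-\beta)\omega\,\eta$ while revenue rises by approximately $\bar p\,\eta$, a net first-order gain of $\bigl(\bar p-(1-\beta)\omega\bigr)\eta>0$ that contradicts optimality. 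The main obstacle here is feasibility of the perturbation: the extra drivers injected at $i_0$ must be routed so that they keep serving productively rather than relocate idly, since idling would push the marginal serving cost above $(1-\beta)\omega$ and destroy the gain. This is exactly where strong connectivity of $\mathbf A$ in Assumption~\ref{assum:1} enters—it supplies a cycle through $i_0$ along which a small self-sustaining driver circulation can be introduced—and verifying that this circulation satisfies all balance constraints of \eqref{eq:4} (and, via Lemma~\ref{lem:1}(2), corresponds to a genuine equilibrium) is the technical heart of this direction.
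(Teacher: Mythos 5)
Your proposal has genuine gaps, and the largest one is structural: of the three claims in the lemma, you only attack the third. Part 1 requires relating \eqref{eq:4} to the equilibrium problem \eqref{eq:3}; the paper does this by showing any optimum of \eqref{eq:3} satisfies $d_i \le x_i + z_i$ (otherwise raising $p_i$ improves profit, so the $\min$ in the revenue collapses) and that $\omega\sum_i\delta_i$ lower-bounds the platform's equilibrium driver cost---your proposal never mentions \eqref{eq:3} at all. Part 2 requires exhibiting compensations: the paper sets $c_i = \frac{x_i}{d_i}\omega(1-\beta)$ when $d_i < x_i$ and $c_i = \omega(1-\beta)$ otherwise (well defined because $d_i>0$), verifies through \eqref{eq:17} that each driver's per-period expected earnings equal $\omega(1-\beta)$, hence $V_i = \omega$, and then uses precisely your aggregate identity $\sum_i\delta_i = (1-\beta)\sum_i x_i$ to compute the per-period cost $\sum_i \min\{x_i, d_i\}\,c_i = \omega(1-\beta)\sum_i x_i = \omega\sum_i\delta_i$. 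You derive that identity but never deploy it for this purpose; worse, your forward-direction argument invokes Lemma~\ref{lem:1}(2) to certify the perturbed circulation as a genuine equilibrium, which is circular, since that is one of the statements being proven.

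For part 3, both directions of your argument terminate at a self-acknowledged ``crux'' or ``technical heart,'' i.e., they are not proofs. In the converse direction the difficulty you flag is real and fatal to your route: the hypothesis $(1-\beta)\omega \ge \bar p$ constrains only driver economics, and the comparison $\bar p \le s$ you hope to find is not derivable from it---indeed, if $s < \bar p$ one can pick demands proportional to the stationary distribution of $\mathbf{A}$ (which exists by strong connectivity), serve them entirely by AVs with $z_i = d_i$ and $r_{ij}=0$, and collect strictly positive profit $\sum_i (p_i - s)d_i$, exactly the balanced cyclic scenario you mention. So that step cannot be closed as proposed; your observation in fact casts doubt on the converse claim itself for cheap AVs. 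The paper proves part 3 by an entirely different, much shorter route: it reduces to the human-only result of \cite[Lemma 1]{Bimpikis2018}, using the remark that at optimality $z_i^*>0$ only if $d_i^*>0$ (an AV facing no demand is pure cost). Your master estimate, though correct, does no work in either direction as things stand, and the forward direction still lacks the feasibility/routing construction you defer, so none of the three parts is actually established.
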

\begin{proof}
The proof of the lemma closely follows that of \cite[Lemma 1]{Bimpikis2018}, where we adjust the claim and the proof so that it applies to the mixed-autonomy setting here. 

For the first part of the lemma, we need to show that any solution for \eqref{eq:3} satisfies $d_i=\theta_i (1-F(p_i)) \leq x_i+z_i$. By contradiction, suppose $d_i>x_i+z_i$, so that increasing the price $p_i$ by a small amount (and thus decreasing $\theta_i (1-F(p_i))$) will improve the value of the objective function. Therefore, $d_i\leq x_i+z_i$ at optimality. Hence we can write the first summation of \eqref{eq:3} as 
\begin{align}
  \label{eq:10}
  \sum_{i=1}^n \min \left\{x_i+z_i, \theta_i (1-F(p_i)) \right\} =\sum_{i=1}^n \theta_i (1-F(p_i)).
\end{align}
The term $\omega\sum_i\delta_i$ is the cost rate for drivers of the platform, which is a lower bound for the platform's cost on human-driven vehicles at equilibrium. Moreover, the constraints in \eqref{eq:4} correspond to the equilibrium constraints in \eqref{eq:3}. Therefore, the optimal value of \eqref{eq:4} is an upper bound for that of \eqref{eq:3}.

Next, we'll see that the upper bound can be reached by the optimal solution supported by some compensations $\left\{c_i \right\}_{i=1}^n$ under equilibrium.



To prove the second part of the lemma, we construct a compensation $\left\{c_i \right\}_{i=1}^n$ so that $V_i = \omega \quad\forall i$. To that end, let 
\begin{align}
  \label{eq:11}
  c_i &= \begin{cases}
                  \frac{x_i}{d_i}\cdot\omega(1-\beta) &\text{if }d_i<x_i\\
                  \omega(1-\beta) & \text{if } d_i\geq x_i.
\end{cases}
\end{align}

 Since we assumed that $d_i > 0$ for all $i$, then $c_i < \infty$ for all $i$ and thus the compensation is well-defined. Moreover, the probability that any driver at location $i$ is assigned to a ride is $\frac{d_i}{x_i}$ when $d_i< x_i$ and is 1 when $d_i \geq x_i$ since the driver takes the priority when drivers and AVs both exist in the platform. Therefore, the expected earnings for a single time period for a driver at location $i$ are equal to $\omega(1-\beta)$. Thus, the expected lifetime earnings are $V_i = \sum_j^\infty \beta^j\omega(1-\beta) = \omega$. Hence, the solution $\left\{p_i, \delta_i, x_i, y_{ij}, z_i, r_{ij} \right\}_{i,j=1}^n$ is supported as an equilibrium using the compensations we constructed above.

Moreover, the cost incurred by the platform under these compensations per period is
\begin{equation*}
\sum_{i=1}^n \min \left\{x_i,\theta_i (1-F(p_i)) \right\}\cdot c_i = \sum_{i=1}^n \min \left\{x_i,d_i \right\} \cdot c_i
\end{equation*}
We construct a partition for the locations so that $I_1 = \left\{i: d_i<x_i \right\}$ and $I_2 = \left\{i: d_i\geq x_i \right\}$. Therefore $\sum_i \min \left\{x_i,d_i \right\} \cdot c_i = \sum_{i\in I_1} d_i c_i + \sum_{i\in I_2} x_i c_i = \sum_{i\in I_1} d_i\cdot \frac{x_i}{d_i}\omega(1-\beta) + \sum_{i\in I_2} x_i \omega(1-\beta) = \sum_i x_i\omega(1-\beta) = \sum_{i=1}^n \delta_i\omega$. The last equality follows from the fact that $\sum_{i=1}^n x_i(1-\beta) = \sum_{i=1}^n\delta_i$ since, at equilibrium, the mass drivers entering the platform is equal to the mass of drivers that are leaving.

The third part of the lemma follows directly from the second part of \cite[Lemma 1]{Bimpikis2018} since $z_i>0$ only if $d_i>0$ in our scenario.
\end{proof}

From Lemma \ref{lem:1}, we conclude that it is without loss of generality for us to focus on the optimization problem \eqref{eq:4} for the rest of the paper.

Moreover, while the objective function of \eqref{eq:4} is not concave in general, it is concave for distributions for which the first summation $\sum_{i=1}^n p_i\theta_i(1-F(p_i))$, the revenue of the platform, is concave. This is true, for example, for the case that $F(\cdot)$ is the uniform distribution and some other distributions. Throughout the rest of the paper, we focus on the case where the rider's willingness to pay is such that the objective function of \eqref{eq:4} is concave.  

\section{Alternative Optimization Problem}
\label{sec:alt-orig-optim}
Even when \eqref{eq:4} possesses a concave objective function, the constraints are non-convex so that solving \eqref{eq:4} remains difficult. This section introduces an alternative optimization problem of the mixed-autonomy model for which the optimal profits will be the same as that of \eqref{eq:4}. 
While the optimal profits are the same, the optimal solutions of these two problems are not exactly the same. However, given the optimal solution of one of the optimization problems, we show that we are able to compute an optimal solution for the other problem with identical profit. In particular, the alternative optimization problem becomes a quadratic optimization problem with linear constraints when $F(\cdot)$ is a uniform distribution.

Consider the optimization problem
\begin{align}
\nonumber \max_{\{p_i,\delta_i,x_i,z_i,r_{ij}\}}&\sum_{i=1}^n p_i \theta_i (1-F(p_i)) - \omega\sum_{i=1}^n \delta_i - s\sum_{i=1}^n z_i\\
\nonumber  \text{s.t.}\quad d_i&=\theta_i (1-F(p_i))\\
\nonumber  x_i&=\beta \sum_{j=1}^n \alpha_{ji}x_j + \delta_i\\
\nonumber  z_i&=\sum_{j=1}^n \alpha_{ji} (d_j-x_j)+ \sum_{j=1}^n r_{ji}\\
\nonumber \sum_{j=1}^n r_{ij}&=z_i -(d_i-x_i )\\
  \label{eq:7} p_i,\delta_i,x_i,z_i,r_{ij}&\geq 0 \qquad \forall i,j.
\end{align}

In the following, we regard \eqref{eq:4} as the \emph{original} optimization problem and \eqref{eq:7} as the \emph{alternative} optimization problem. 

Theorem 1 below states that \eqref{eq:4} and \eqref{eq:7} have the same optimal profits for any $\beta$, $k$ and adjacency matrix $\mathbf{A}$. Moreover, given one optimal solution for \eqref{eq:4} or \eqref{eq:7}, it is possible to compute an optimal solution for the other.

\begin{thm}
\label{thm:equiv}
Consider the original optimization problem \eqref{eq:4} and alternative optimization problem \eqref{eq:7}. Let
\begin{align}
  \label{eq:18}
  \mathbf{u}^{ori*} = \left\{p_i^{ori*}, \delta_i^{ori*}, z_i^{ori*}, x_i^{ori*}, y_{ij}^{ori*}, r_{ij}^{ori*}\right\}_{i,j=1}^n
\end{align}
be an optimal solution for \eqref{eq:4} and 
\begin{align}
  \label{eq:19}
  \mathbf{u}^{alt*} = \left\{p_i^{alt*}, \delta_i^{alt*}, z_i^{alt*}, x_i^{alt*}, r_{ij}^{alt*}\right\}_{i,j=1}^n
\end{align}
be an optimal solution for \eqref{eq:7}. 
Then the following holds:
\begin{itemize}
\item The original optimization problem and the alternative problem obtain the same optimal profits for all possible choices of $\beta$, $k$ and adjacency matrix $\mathbf{A}$.

\item The optimal solutions satisfy $x^{ori*} =  x^{alt*}$, $z^{ori*} = z^{alt*}$, $p^{ori*} = p^{alt*}$ and $\delta^{ori*} = \delta^{alt*}$.

\item If $\theta_i (1-F(p^{ori*}_i))\leq x_i^{ori*}$ for all $i$ in the original optimization problem, then $z_i^{ori*}=0$ for all $i$ and setting $r^{alt*}_{ij} = y^{ori*}_{ij}$ for all $i,j$ constitutes an optimal solution for the alternative problem. 

\item If $ \theta_i (1-F(p^{alt*}_i))\leq x_i^{alt*}$ for all $i$ in the alternative optimization problem, then $z_i^{alt*}=0$ for all $i$ and setting $y^{ori*}_{ij} = r^{alt*}_{ij}$ constitutes an optimal solution for the original optimization problem. 
\end{itemize}
\end{thm}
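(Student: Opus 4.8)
The plan is to reduce both problems to an optimization over the price vector and the driver distribution alone, by eliminating the entry flow $\delta$ and the relocation variables $y,r$, and then to show the two reduced problems agree at their optima. The first step is to eliminate $\delta$: summing each $x$-equation over $i$ and using $\sum_i\alpha_{ji}=1$ together with the row-sum constraints $\sum_i y_{ji}=\max\{x_j-d_j,0\}$ (respectively the alternative's linear $x$-equation), one obtains $\sum_i\delta_i=(1-\beta)\sum_i x_i$ in \emph{both} problems, exactly as in the proof of Lemma \ref{lem:1}. Hence in both cases the objective becomes $\sum_i p_i\theta_i(1-F(p_i))-\omega(1-\beta)\sum_i x_i-s\sum_i z_i$, which depends on the relocation variables only through $\sum_i z_i$.

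Next I would characterize, for fixed $p$ (hence fixed $d_i=\theta_i(1-F(p_i))$), the feasible set of $x$ and the minimal attainable $\sum_i z_i$ in each problem. Since $y$ and $r$ each range over a transportation polytope on the complete bipartite graph of locations, their feasibility reduces to an always-slack total-mass (Hall) condition, and minimizing the fleet is a small min-cost-flow computation. Carrying this out I expect: in the original problem $x$ is feasible iff $x_i\ge\beta\sum_j\alpha_{ji}\min\{x_j,d_j\}$ for all $i$, with minimal fleet $\sum_i\max\{g_i,e_i\}$ where $e_j=\max\{d_j-x_j,0\}$ and $g_i=\sum_j\alpha_{ji}e_j$; in the alternative problem $x$ is feasible iff $x_i\ge\beta\sum_j\alpha_{ji}x_j$, with minimal fleet $\sum_i\max\{h_i,e_i\}$ where $h_i=\sum_j\alpha_{ji}(d_j-x_j)$. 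Because $g_i\ge h_i$ and the alternative's feasible set is contained in the original's, the two formulations differ only in how the excess-driver mass $\max\{x_j-d_j,0\}$ is treated: the original routes it freely through $y$, the alternative routes it proportionally to $\mathbf A$ and nets it against the AV demand.

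The hard part, and the heart of the proof, is to show these reduced problems share the same optimal value and the same optimal $(p,x)$. The containment of feasible sets and the inequality $g_i\ge h_i$ push in opposite directions, so neither problem is a relaxation of the other; the reconciliation rests on the structural claim that an optimal solution never carries wasteful excess drivers. I would prove, by an exchange argument that removes excess drivers where they neither serve demand nor reduce the fleet, that there is an original optimizer whose $x$ also lies in the alternative's feasible set and at which $\max\{g_i,e_i\}=\max\{h_i,e_i\}$, so that the two minimal-cost expressions agree at that point. Granting this, the inner minima coincide for every $p$; maximizing over $p$ yields equal optimal profits (first bullet) and, after choosing the original relocation $y_{jk}=\alpha_{jk}\max\{x_j-d_j,0\}$, the identifications $x^{ori*}=x^{alt*}$, $z^{ori*}=z^{alt*}$, $p^{ori*}=p^{alt*}$, and $\delta^{ori*}=\delta^{alt*}$ (second bullet).

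Finally, for the last two bullets I would specialize to the regime $d_i\le x_i$ for all $i$, where $e_i=0$, so the fleet equations admit $z_i=0$, and this is optimal since $s\ge0$ (uniquely so when $s>0$). In this regime the original's constraint $\sum_k y_{jk}=x_j-d_j$ and the alternative's $\sum_k r_{jk}=z_j-(d_j-x_j)$ coincide once $z=0$, so I would verify directly that transplanting $r^{alt}=y^{ori}$ (respectively $y^{ori}=r^{alt}$) satisfies the other problem's remaining equation: the $r$-constraint forces $z^{alt}=0$, and the alternative $z$-equation collapses to the column balance $\sum_j r_{ji}=\sum_j\alpha_{ji}(x_j-d_j)$. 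In the direction from the alternative to the original (fourth bullet) this balance is precisely the alternative $z$-equation with $z=0$, so the construction is immediate; in the direction from the original to the alternative (third bullet) it holds for the canonical proportional $y$ produced in the previous step, which is exactly what makes the transplant a feasible, and hence optimal, alternative solution.
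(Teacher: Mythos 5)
Your reduction framework is sound and in fact cleaner than the paper's argument: eliminating $\delta$ via $\sum_i\delta_i=(1-\beta)\sum_i x_i$, and reducing $y$ and $r$ to transportation-polytope feasibility with minimal fleets $\sum_i\max\{g_i,e_i\}$ and $\sum_i\max\{h_i,e_i\}$, are all correct and verifiable. But the step you yourself call the heart of the proof is not only unproven --- in the form you state it, it is false. The inner minima do \emph{not} coincide for every $p$. Take the directed $3$-cycle $\alpha_{12}=\alpha_{23}=\alpha_{31}=1$, $\beta=0.9$, $\theta=\mathbf{1}$, uniform $F$, and a price vector giving $d=(1,\epsilon,\epsilon)$. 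In the original problem \eqref{eq:4}, $x=(1,\,0.9,\,\epsilon)$ is feasible: the $0.9-\epsilon$ idle drivers at location $2$ are routed back to location $1$ through $y$ (whose column sums are free), the constraint $x_i\ge\beta\sum_j\alpha_{ji}\min\{x_j,d_j\}$ holds at all three locations, $e=g=0$, and the cost is $\omega(1-\beta)(1.9+\epsilon)$ with no AVs. In the alternative problem \eqref{eq:7}, feasibility requires $x_i\ge\beta\sum_j\alpha_{ji}x_j$, i.e.\ idle drivers effectively circulate proportionally to $\mathbf{A}$; the minimal $x\ge d$ satisfying this is $(1,\,0.9,\,0.81)$, with cost $\omega(1-\beta)\cdot 2.71$, and for $s$ large any solution using AVs costs still more. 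So at this $p$ the original's inner minimum is strictly smaller, and your concluding step (``the inner minima coincide for every $p$; maximizing over $p$ yields equal optimal profits'') collapses.

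The example also shows why the proposed repair cannot be a local exchange at fixed $p$: the excess drivers at location $2$ are not wasteful there --- they are forced by the equilibrium inflow constraint (they are precisely the drivers who just served location $1$'s demand), and they cannot be removed without violating feasibility at this $p$. What is true, and what the paper's proof exploits, is that such configurations do not arise at an \emph{optimal} $p$: the paper argues that at an optimum of either problem, either $d_i\ge x_i$ for all $i$ or $d_i\le x_i$ for all $i$ (the mixed case is excluded via strong connectivity of $\mathbf{A}$ together with the $z$-flow constraints and the KKT conditions \eqref{eq:12-3}--\eqref{eq:12-4}), and then matches the two problems case by case, including an explicit construction of $(\mathbf{y},\mathbf{r})$ as in Table \ref{tab:1}. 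Any correct completion of your approach must import a global argument of this type --- showing the problematic demand profiles are suboptimal --- and, for your second bullet ($x^{ori*}=x^{alt*}$, etc.), an additional alignment argument; neither is supplied by the exchange argument as described. One point in your favor: your observation that the transplant $r^{alt}=y^{ori}$ in the third bullet only works for the canonical proportional $y$, not for an arbitrary optimal $y^{ori*}$, is a genuine subtlety that the paper itself glosses over in its Case 2, where it asserts \eqref{eq:8} and the alternative with $z=0$ are ``exactly the same problem'' even though the alternative pins the column sums of $r$ while \eqref{eq:8} leaves those of $y$ free.
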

\newcommand{\profit}{\phi}
\begin{proof}
Let $\profit^{ori*}$ and $\profit^{alt*}$ represent the optimal profits of the two problems \eqref{eq:4} and \eqref{eq:7}, respectively, and let $d^{ori*}_i=\theta_i (1-F(p^{ori*}_i))$ and $d^{alt*}_i= \theta_i (1-F(p^{alt*}_i))$.

To prove that the optimal profits of the two problems are equal, we first show that $\profit^{ori*} \leq  \profit^{alt*}$ and then $\profit^{ori*} \geq  \profit^{alt*}$.

We consider three cases to prove $\profit^{ori*} \leq  \profit^{alt*}$.

\underline{Case 1:} $d^{ori*}_i\geq x^{ori*}_i \quad \forall i$. Then $\mathbf{u}^{ori*}$ will be feasible for alternative problem because they lead to the same optimization problem in this case. Therefore $\profit^{ori*} \leq  \profit^{alt*}$.

\underline{Case 2:} $d^{ori*}_i\leq x^{ori*}_i \quad \forall i$. Then the AVs are not needed in any location and $z_i = 0, r_{ij} = 0 \quad \forall i,j$. Then the original optimization problem becomes

\begin{align}
\nonumber \max_{\{p_i,\delta_i,x_i,y_{ij},z_i,r_{ij}\}} \qquad &\hspace{-5pt}\sum_{i=1}^n p_i \theta_i (1-F(p_i)) - \omega\sum_{i=1}^n \delta_i\\
\nonumber  \text{s.t.}\quad d_i&=\theta_i (1-F(p_i))\\
\nonumber  x_i&=\beta \left[\sum_{j=1}^n \alpha_{ji} d_j + \sum_{j=1}^n y_{ji} \right] + \delta_i\\
\nonumber \sum_{j=1}^n y_{ij}&= x_i - d_i\\
  \label{eq:8} p_i,\delta_i,x_i,y_{ij}&\geq 0 \qquad \forall i,j.
\end{align}

Let $z^{alt}_i=0$ and $y^{alt}_{ij}=0 \quad \forall i,j$. Then the alternative problem becomes exactly the same problem as \eqref{eq:8} when we substitute $r_{ij}$ with $y_{ij}$, which proves the claim.



\underline{Case 3:} There exists some location $i$ such that $x^{ori*}_i > d^{ori*}_i$ and some location $j$ such that $x^{ori*}_j < d^{ori*}_j$. In this case, if there is no $i$ such that  $x^{ori*}_i = d^{ori*}_i$, then let $I_1=\{i: x^{ori*}_i > d^{ori*}_i \}$ and let $I_2=\{i: x^{ori*}_i < d^{ori*}_i \}$. We can then consider an aggregated network with locations $1$ and $2$ representing the combined locations in $I_1$ and $I_2$, respectively. 
Hence, in this aggregated network, $\alpha_{11} = \alpha_{22} \geq 0$; $\alpha_{12} >0$ and $\alpha_{21} >0$ by our assumption that the directed graph defined by adjacency matrix A is strongly connected. 
Since $d^{ori*}_1<x^{ori*}_1$, then $z^{ori*}_1 = 0$. But $z^{ori*}_1 = \max \left\{d^{ori*}_1-x^{ori*}_1 , 0\right\} \alpha_{11} + \max \left\{d^{ori*}_2-x^{ori*}_2 , 0\right\} \alpha_{21} + \sum_{j=1}^n r^{ori*}_{j1} = (d^{ori*}_2-x^{ori*}_2)\alpha_{21}+\sum_{j=1}^n r^{ori*}_{j1}$ since $d^{ori*}_2-x^{ori*}_2 >0$ and $d^{ori*}_1-x^{ori*}_1 <0$. Hence $z_1^{ori*}>0$ which leads to a contradiction. Therefore if there is no $i$ such that  $x^{ori*}_i = d^{ori*}_i$, then either $x^{ori*}_i > d^{ori*}_i \quad$ for all $i$ or $x^{ori*}_i < d^{ori*}_i$  for all $i$.

If there exists $i$ such that $x^{ori*}_i = d^{ori*}_i$, define $I_1$ and $I_2$ as above and introduce $I_3=\{i: x^{ori*}_i = d^{ori*}_i\}$. 
Similar to the above argument, we show that $z^{ori*}_1 =z^{ori*}_3 = 0$. Since $z^{ori*}_1=\sum_{j=1}^n \alpha_{j1} \max \left\{d^{ori*}_j-x^{ori*}_j , 0\right\} + \sum_{j=1}^n r^{ori*}_{j1}$ while $d^{ori*}_2-x^{ori*}_2>0$, then $\alpha_{21}=0$. Similarly, we must have $\alpha_{23}=0$. Therefore, we have $\alpha_{22}=1$ since $\sum_{j=1}^3 \alpha_{ij}=1$. However, $\alpha_{22}=1$ means that some components in the graph are not strongly connected with the others, which contradicts our assumption. Hence this mixed situation cannot be an optimal solution for the problem.
Thus, up to now, we have shown that $\profit^{ori*} \leq  \profit^{alt*}$.
Next we show that $\profit^{ori*} \geq  \profit^{alt*}$. 


\underline{Case 1:} If $d^{alt*}_i\geq x^{alt*}_i$ for all $i$, then $\mathbf{u}^{alt*}$ will be feasible for the original problem because they lead to the same problem in this case. Therefore $\profit^{ori*} \geq  \profit^{alt*}$.

\underline{Case 2:} If $d^{alt*}_i\leq x^{alt*}_i \quad \forall i$, suppose $\exists i$ s.t. $z^{alt*}_i >0$. Then $\sum_{i=1}^n {z^{alt*}_i}>0$. Recall
 $z_i = \sum_{j=1}^n r_{ij}+(d_i-x_i ) = \sum_{j=1}^n \alpha_{ji} (d_j-x_j)+ \sum_{j=1}^n r_{ji}$ in the alternative problem, thus $\exists j,k$ s.t. $r^{alt*}_{ij} >0$ and $r^{alt*}_{ki} >0$. Since $r_{ii}$ is not affected by any other equations, if both $i^{th}$ row and $i^{th}$ column are greater than 0, we produce the same result by adding a small amount  $\epsilon$ to $r^{alt*}_{ii}$, reduce the same amount from $r^{alt*}_{ij}$ and  $r^{alt*}_{ki}$ and add  $\epsilon$ to $r^{alt*}_{kj}$. After this transformation, the other variables are not changed and the new $r$ will produce the same profit as before while $r_{ii}>0$. However, from KKT conditions \eqref{eq:12-3} and  \eqref{eq:12-4} that are discussed in detail in Section \ref{sec:lower-bound-using}, if $z_i>0$, then $r_{ii}>0$ will not produce an optimal result (notice that \eqref{eq:12-3} and  \eqref{eq:12-4} hold at optimality even without Assumption \ref{assum:2}). Therefore $z_i=0 \quad \forall i$ in this case. 
Hence $\mathbf{u}^{ori}$ = 
$\left\{p_i^{ori}=p_i^{alt*}, \delta_i^{ori}=\delta_i^{alt*}, z_i^{ori}=0,x_i^{ori}= \right. \\ 
\left. x_i^{alt*}, y_{ij}^{ori}=r_{ij}^{alt*}, r_{ij}^{ori}=0 \right\}_{i,j=1}^n $
will produce the same profit in the original optimization problem by the same justification as in case 2 of this proof. Thus $\profit^{ori*} \geq  \profit^{alt*}$.

\underline{Case 3:} If there exist $\beta$ and k such that the optimal solution $\mathbf{u}^{alt*}$ does not satisfy the two situations above, which means there exist locations such that $d^{alt*}_i > x^{alt*}_i$ for some i and $d^{alt*}_j < x^{alt*}_j$ for some j at the same time. Also there may exist some locations k such that $d^{alt*}_k = x^{alt*}_k$.
By combining the locations having the same relation between $\mathbf{d}$ and $\mathbf{x}$, we can reduce the problem to a three location problem where $x^{alt*}_1>d^{alt*}_1$, $x^{alt*}_2<d^{alt*}_2$ and $x^{alt*}_3=d^{alt*}_3$.
Therefore, in Table \ref{tab:1}, we list and compare the constraints between the original optimization problem and the alternative one (knowing the objective functions are the same for both cases).

Let $\left\{z_i^{ori} \right\}_{i=1}^n= \left\{z_i^{alt*} \right\}_{i=1}^n$, $\left\{x_i^{ori} \right\}_{i=1}^n = \left\{x_i^{alt*} \right\}_{i=1}^n$,  $\left\{\delta_i^{ori} \right\}_{i=1}^n = \left\{\delta_i^{alt*} \right\}_{i=1}^n$ and $\left\{p_i^{ori} \right\}_{i=1}^n = \left\{p^{alt*} \right\}_{i=1}^n$ (which indicates $\left\{d_i^{ori} \right\}_{i=1}^n = \left\{d_i^{alt*} \right\}_{i=1}^n$). This guarantees the profit of the original problem to be the same as the optimal profit of the alternative problem. Moreover, there exist $\left\{y_{ij}^{ori} \right\}_{i,j=1}^n$ and $\left\{r_{ij}^{ori} \right\}_{i,j=1}^n$ so that $\mathbf{u}^{ori}$ will be a feasible solution for the original problem. To keep the succinctness of our formula, let $\Delta = (x_1-d_1)>0$.

Let $\mathbf{y}^{ori}$ =$\begin{bmatrix}
    \alpha_{11}\Delta & \alpha_{12}\Delta & \alpha_{13}\Delta\\
    0 & 0 & 0\\
    0 & 0 & 0
\end{bmatrix}$ 
and $\mathbf{r}^{ori}$ =
$\begin{bmatrix}
    r^{alt*}_{11}-\alpha_{11}\Delta & r^{alt*}_{12}-\alpha_{12}\Delta &r^{alt*}_{13} - \alpha_{13}\Delta\\	
    r^{alt*}_{21} & r^{alt*}_{22} & r^{alt*}_{23}\\
    r^{alt*}_{31} & r^{alt*}_{32} & r^{alt*}_{33}
\end{bmatrix}$. 

Then the constraints for the original problem are all satisfied while the profit remains unchanged. Therefore $\profit^{ori*} \geq  \profit^{alt*}$.

Similarly, if there's no location that $x^{alt*}_i=d^{alt*}_i$, then by setting $\alpha_{i3}=\alpha_{3i}=0$ for all $i$, we will obtain the same result.

\begin{table}[ht]
\caption{The List of Constraints}
\label{constraints}
\begin{center}
\setlength\tabcolsep{1pt}
\begin{tabular}{|r l|r l|}
    \hline
    \multicolumn{2}{|c|}{Original} &     \multicolumn{2}{|c|}{Alternative} \\ 
    \hline 
    $x_1 $&$=  \delta_1 + \beta \Big[\alpha_{21} x_2 +\alpha_{31} x_3$  &    $x_1 $&$= \delta_1 + \beta \Big[\alpha_{21} x_2 +\alpha_{31} x_3$\\
& \quad   $ + \alpha_{11} d_1 + \sum_j y_{j1} \Big]$ &   & \quad $+ \alpha_{11}x_1$\Big]\\
    $\sum_j y_{1j}$&$= x_1 - d_1$ &  &\\
    $z_1 $&$= \alpha_{21}(d_2-x_2)+\sum_j r_{j1}$ &  $z_1 $&$= \alpha_{21}(d_2-x_2)+\sum_j r_{j1}$\\
& & & $\quad +\alpha_{11}(d_1-x_1)$ \\
    $\sum_j r_{1j} $&$= z_1$ & $\sum_j r_{1j} $&$= z_1-(d_1-x_1)$\\
    \hline
    $x_2 $&$= \delta_2 + \beta \Big[\alpha_{12} d_1 +\alpha_{22} x_2 $ & $x_2 $&$= \delta_2 + \beta \Big[\alpha_{12} x_1 +\alpha_{22} x_2 $\\
&  \quad   $+ \alpha_{32}x_3 + \sum_j y_{j2} \Big]$ &  & $\quad +\alpha_{32}x_3 \Big]$\\
    
    $\sum_j y_{2j}$&$= 0$ &   &\\
    $z_2 $&$= \alpha_{22}(d_2-x_2) + \sum_j r_{j2}$ & $z_2 $&$= \alpha_{22}(d_2-x_2)+\sum_j r_{j2}$\\
& & & $\quad +\alpha_{12}(d_1-x_1)$ \\
    $\sum_j r_{2j} $&$= z_2-(d_2-x_2)$ & $\sum_j r_{2j} $&$= z_2-(d_2-x_2)$\\
    \hline    
    $x_3 $&$= \delta_3 + \beta \Big[\alpha_{13} d_1 +\alpha_{23} x_2 $ & $x_3 $&$= \delta_3 + \beta \Big[\alpha_{13} x_1 +\alpha_{23} x_2 $\\
& \quad    $ +\alpha_{33}x_3 + \sum_j y_{j3} \Big] $ & & $ \quad +\alpha_{33}x_3$\Big]\\
    $\sum_j y_{3j}$&$= 0$ &  & \\
    $z_3 $&$=  \alpha_{23}(d_2-x_2)$ & $z_3 $&$= \alpha_{23}(d_2-x_2)$\\
  &\quad    $ + \sum_j r_{j3}$ &&  $+\alpha_{13}(d_1-x_1) + \sum_j r_{j3}$\\
    $\sum_j r_{3j} $&$= z_3$ & $\sum_j r_{3j} $&$= z_3$\\
    \hline    
        \multicolumn{2}{|c|}{$p_i,\delta_i,x_i,y_{ij},z_i,r_{ij}\geq 0 \quad \forall i,j$} &     \multicolumn{2}{|c|}{$p_i,\delta_i,x_i,y_{ij},z_i,r_{ij}\geq 0 \quad \forall i,j$}\\
     \hline
\end{tabular}
\end{center}
\label{tab:1}
\end{table}

Moreover, since we know that this $\mathbf{u}^{ori}$ will fall into the third case and is not optimal for the original problem, while $\profit^{ori*} \leq  \profit^{alt*}$, then case 3 is not optimal for the alternative problem. 

Therefore, $\profit^{ori*} \geq  \profit^{alt*}$  and thus $\profit^{ori*} = \profit^{alt*}$.
\end{proof}

\begin{definition}
We refer to the original problem \eqref{eq:4} or its alternation \eqref{eq:7} as the \emph{mixed autonomy system}. We refer to \eqref{eq:4} with the additional constraint that $z_i=0$ for all $i$ as the \emph{human-only} system.
\end{definition}

Corollary \ref{cor:profit} below can be derived from part of the proof process above.

\begin{cor}
\label{cor:profit}
  The optimal profit for the mixed-autonomy network will be no less than the one for the human-only network, \emph{i.e.}, a network for which $z_i=0$ for all $i$.
\end{cor}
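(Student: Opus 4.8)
The plan is to recognize that the corollary requires essentially no new work: the human-only system is, by the definition just stated, nothing but the mixed-autonomy problem \eqref{eq:4} with the additional equality constraints $z_i = 0$ for all $i$. Since both problems carry the identical objective $\sum_{i=1}^n p_i\theta_i(1-F(p_i)) - \omega\sum_{i=1}^n\delta_i - s\sum_{i=1}^n z_i$, the statement reduces to the elementary monotonicity principle that restricting the feasible set of a maximization can never increase its optimal value. Thus I would argue by a feasible-set inclusion rather than by constructing any explicit comparison of solutions.

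Concretely, I would proceed in three short steps. First, observe that on the human-only feasible set the penalty term $s\sum_i z_i$ vanishes identically, so the two problems genuinely optimize the \emph{same} objective over their respective feasible sets. Second, verify the inclusion: any tuple $\{p_i,\delta_i,x_i,y_{ij},z_i,r_{ij}\}$ that is feasible for the human-only system satisfies every constraint appearing in \eqref{eq:4} — it merely happens to have $z_i=0$ — so the human-only feasible set is a subset of the feasible set of \eqref{eq:4}. Third, conclude that maximizing a fixed objective over the larger set \eqref{eq:4} yields a value at least as large as maximizing over the restricted human-only set, which is exactly the claimed inequality. This is precisely the phenomenon already exposed in Case~2 of the proof of Theorem~\ref{thm:equiv}, where imposing $d_i\leq x_i$ (equivalently $z_i=0$) for all $i$ collapses \eqref{eq:4} to the human-only problem \eqref{eq:8}; the corollary simply reads off that this collapsed problem is an inner maximization.

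I expect no genuine technical obstacle; the single point deserving a moment's care is confirming that forcing $z_i=0$ everywhere is an honest restriction of the constraint set rather than something that could render the remaining equilibrium constraints infeasible or silently change the objective. Here I would invoke nonnegativity together with the strong connectivity of $\mathbf{A}$ from Assumption~\ref{assum:1}: setting $z_i=0$ for all $i$ in the constraint $z_i=\sum_j \alpha_{ji}\max\{d_j-x_j,0\}+\sum_j r_{ji}$ forces $d_i\leq x_i$ and $r_{ij}=0$ for all $i,j$, so the human-only feasible set is nonempty and coincides with the feasible set of \eqref{eq:8}. With the reduction clean, the inequality between optimal profits is immediate.
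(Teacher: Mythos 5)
Your proposal is correct and takes essentially the same route as the paper: the paper likewise observes that the human-only problem is \eqref{eq:4} restricted by $\mathbf{z}=\mathbf{0}$, $\mathbf{r}=\mathbf{0}$ (which collapses it to \eqref{eq:8}), and that plugging the human-only optimum with $\mathbf{z}=\mathbf{0}$, $\mathbf{r}=\mathbf{0}$ into the mixed-autonomy problem gives a feasible point, i.e.\ exactly your feasible-set-inclusion argument. Your added check that $z_i=0$ together with strong connectivity forces $d_i\leq x_i$ and $r_{ij}=0$ is a careful touch the paper leaves implicit, but it does not change the substance.
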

\begin{proof}
  The mixed-autonomy optimization problem can be transformed into \eqref{eq:8} by setting $\mathbf{z=0}$ and $\mathbf{r=0}$. Furthermore, \eqref{eq:8} is exactly the optimization problem for the system without any AVs. Therefore, by letting $\mathbf{z=0}$ and $\mathbf{r=0}$ and the other variables equal to the optimal solution for the optimization problem for the system without AV, we obtain a feasible solution for the mixed-autonomy system. Therefore the optimal profit for the mixed-autonomy system will be no less than that of the system without autonomous system.
\end{proof}

\section{When Are AVs Beneficial?} 
\label{sec:lower-bound-using}
We now turn our attention to studying properties of the equilibria that result from the model and optimization problems proposed above. We particularly seek to understand when the platform stands to benefit from introducing AVs in its fleets. We thus are especially interested in comparing the solution to the above profit-maximizing optimization problem with mixed autonomy to the solution with no AVs. To that end, we have the following.


 In this section we first define a family of \emph{star-to-complete networks} introduced in \cite{Bimpikis2018}. Then we establish an upper bound on $k$ for which the profit of the network with AVs will be greater than the human-only network only if $k$ is greater than this bound.
We first compute the Lagrangian of the alternative optimization problem \eqref{eq:7} since it has the same optimal profits as the mixed-autonomy network. Then, a lemma establishes when the optimal profits of the mixed-autonomy system and the human-only will diverge, followed by a proposition that gives an upper bound of $k$ for the diversion.

\begin{definition}
The class of demand patterns $(\mathbf{A}^\xi,\mathbf{1})$ with $n\geq 3$, $\xi \in \left[0,1 \right]$, and
\begin{align}
\mathbf{A}^\xi &= \begin{bmatrix}
    0 & \frac{1}{n-1} & \frac{1}{n-1} & \dots & \frac{1}{n-1} \\
    c_1 & 0 & c_2 & \dots & c_2 \\   
    c_1 & c_2 & 0 & \dots & c_2 \\
    \vdots &  \vdots &  \vdots &  \ddots &  \vdots\\
    c_1 & c_2 & \dots & c_2 & 0 \\
\end{bmatrix},\\
c_1 &= \frac{\xi}{n-1}+(1-\xi),\qquad
c_2 = \frac{\xi}{n-1}
\end{align}
is the family of \emph{star-to-complete} networks. It is a \emph{star} network when $\xi=0$ for which we write $\mathbf{A}^S=\mathbf{A}^0$ and a \emph{complete} network when $\xi=1$ for which we write $\mathbf{A}^C=\mathbf{A}^1$. Therefore the general adjacency matrix of a \emph{star-to-complete} network can be written as $\mathbf{A}^\xi$ = $\xi \mathbf{A}^{C} + (1-\xi) \mathbf{A}^{S}$.
\end{definition}

In addition, we make the following assumption throughout the rest of paper.
\begin{assum}
\label{assum:2}
  All locations have the same mass of potential riders, which we normalize to one, i.e., $\mathbf{\theta}=\mathbf{1}$. Also, the rider's willingness to pay is uniformly distributed in $\left[0,1 \right]$ so that $F(p)=p$ for $p\in[0,1]$.
\end{assum}

Define
\begin{multline*}
L(\mathbf{p,r,x},\boldsymbol{\delta,\lambda})=\mathbf{p}^T\mathbf{(1-p)-}\omega\mathbf{1}^T\boldsymbol{\delta}-s\mathbf{1}^T\mathbf{z}\\
+\boldsymbol{\lambda}^T\left[ \mathbf{A}^T\mathbf{(1-p-x)+r}^T\mathbf{1-z} \right]\\
+\boldsymbol{\gamma}^T\left[\mathbf{1-p-x+r1-z}\right]+ \boldsymbol{\mu}^T\left[\beta \mathbf{A}^T\mathbf{x}+\boldsymbol{\delta}-\mathbf{x}\right].
\end{multline*}
That is, $L$ is the \emph{Lagrangian} function associated with the optimization problem \eqref{eq:7} under Assumption \ref{assum:2}.

By the KKT conditions\cite{Boyd2004}, we obtain the following properties that must hold at optimality:
\begin{align}
  \label{eq:12}
\frac{\partial L}{\partial \delta_{i}} &= -\omega +\mu_i \leq 0\\ 
  \label{eq:12-2}\frac{\partial L}{\partial x_{i}} &= \sum_{j=1}^n (\beta \mu_j -\lambda_j)\alpha_{ij}-\gamma_i-\mu_i\leq 0 \\
  \label{eq:12-3}\frac{\partial L}{\partial z_{i}}& = -s - \lambda_i- \gamma_i \leq 0\\
  \label{eq:12-4}\frac{\partial L}{\partial r_{ij}} &=  \lambda_j + \gamma_i \leq 0. 
\end{align}

If $\delta_{i}$, $x_{i}$, $z_{i}$ or $r_{ij}$ is strictly greater than zero, then the equality of the corresponding differential equation must hold.

The next Lemma establishes that, for a star-to-complete network, if it is optimal for the platform to use AVs at some location, then it is optimal to use AVs at all locations.
\begin{lemma}
\label{lem:aut}
  If, in some star-to-complete network, the optimal profit of the mixed-autonomy system is strictly greater than that of the human-only system, then the optimal solution of the mixed-autonomy system satisfies $z_i>0$ for all $i$.
\end{lemma}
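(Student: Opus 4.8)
The plan is to work throughout with the alternative problem \eqref{eq:7}, which is concave under Assumption \ref{assum:2} and shares the optimal profit of the mixed-autonomy system by Theorem \ref{thm:equiv}, and to combine its linear flow-balance constraints with the KKT conditions \eqref{eq:12}--\eqref{eq:12-4}. Writing $e_i := d_i - x_i$ for the excess demand at location $i$, the relocation constraints give $\sum_j r_{ij} = z_i - e_i \ge 0$, hence the basic inequality $z_i \ge \max\{e_i,0\}$, while the inflow constraint reads $z_i = \sum_j \alpha_{ji} e_j + \sum_j r_{ji}$.

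First I would reduce to the existence of excess demand. Since the profit gap is strict, the optimal solution cannot have $z_i=0$ at every location (otherwise it would be feasible for the human-only system \eqref{eq:8} with equal value, contradicting strictness), so $\sum_i z_i>0$. Invoking the argument used in Case 2 of the proof of Theorem \ref{thm:equiv} (the observation that $z_i>0$ together with $d_i\le x_i$ for all $i$ is excluded by \eqref{eq:12-3}--\eqref{eq:12-4} via the $r_{ii}$ perturbation), $\sum_i z_i>0$ forces $e_j>0$ for at least one location $j$.

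Next I would exploit symmetry: because the objective is concave and the constraints of \eqref{eq:7} are invariant under any permutation of the leaf locations $2,\dots,n$, averaging an optimizer over these permutations yields an optimal solution in which all leaf quantities coincide. It then suffices to prove $z_1>0$ (center) and $z_L>0$ (a representative leaf). The easy propagation directions follow from the primal constraints together with the nonnegative entries of $\mathbf{A}^\xi$: if a leaf has $e_L>0$ then $z_L\ge e_L>0$ directly, and the center's service inflow $(n-1)c_1 e_L>0$ then forces $z_1>0$ (all leaves feed the center with the same sign, by symmetry); likewise $e_1>0$ gives $z_1\ge e_1>0$ at once.

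The main obstacle is the remaining asymmetric configuration, in which excess demand concentrates at the center ($e_1>0$) while every leaf runs a driver surplus ($e_L\le 0$); here the leaf inflow $z_L=\tfrac{1}{n-1}e_1+(n-2)c_2 e_L+\sum_j r_{jL}$ mixes signs, so the relaxed constraints alone admit a feasible point with $z_L=0$ and a pure flow argument does not suffice. To exclude it I would use optimality: $z_1>0$ activates \eqref{eq:12-3} as $\lambda_1+\gamma_1=-s$, complementary slackness on the relocation edges carrying positive $r_{ij}$ turns \eqref{eq:12-4} into equalities $\lambda_j+\gamma_i=0$ coupling the center and leaf multipliers, and feeding these, together with the stationarity condition \eqref{eq:12-2} in $x$, back into \eqref{eq:12-3} at a leaf should force $\lambda_L+\gamma_L=-s$ and hence $z_L>0$. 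Verifying that the active-relocation graph is rich enough, using strong connectivity of $\mathbf{A}^\xi$ (i.e. $c_1,c_2>0$ for $\xi>0$ and the center--leaf coupling $c_1=1>0$ for $\xi=0$), to propagate these multiplier equalities to every location is the delicate point, and is where I expect the bulk of the work to lie.
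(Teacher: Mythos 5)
Your overall skeleton (strict profit gap $\Rightarrow$ some location has excess demand $\Rightarrow$ propagate positivity of $z$ through the network) matches the paper's, but there is a genuine gap, and it sits exactly at the case you flag as the ``main obstacle.'' The gap is self-inflicted by your choice to work in the alternative problem \eqref{eq:7}. There the AV balance constraint is linear, $z_i = \sum_j \alpha_{ji}(d_j - x_j) + \sum_j r_{ji}$, so surplus locations enter with a negative sign, and, as you correctly observe, a feasible point with $e_1>0$, $e_L\le 0$ and $z_L=0$ exists (indeed $\sum_j r_{Lj} = z_L - e_L$ even lets a leaf with $z_L=0$ ``emit'' relocations). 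The paper instead proves the lemma on the original problem \eqref{eq:4} --- legitimate because Theorem \ref{thm:equiv} gives $z^{ori*}=z^{alt*}$ --- where the balance constraint reads $z_i = \sum_j \alpha_{ji}\max\{d_j-x_j,0\} + \sum_j r_{ji}$. The $\max$ clips every surplus location to zero, and since the off-diagonal entries of the first row of $\mathbf{A}^\xi$ equal $\tfrac{1}{n-1}$ for \emph{every} $\xi$, center excess $d_1-x_1>0$ immediately forces $z_i \ge \tfrac{1}{n-1}(d_1-x_1)>0$ at every leaf: your hard case is trivial there. Moreover, the KKT plan you sketch for that case cannot work as stated. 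Your final step is ``$\lambda_L+\gamma_L=-s$ and hence $z_L>0$,'' but complementary slackness only runs in the direction $z_L>0 \Rightarrow \lambda_L+\gamma_L=-s$; the stationarity condition \eqref{eq:12-3} holding with equality is perfectly consistent with $z_L=0$, so no amount of multiplier propagation can yield $z_L>0$ this way.

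If you insist on staying inside \eqref{eq:7}, the correct patch is different from the one you propose: Case 3 in the proof of Theorem \ref{thm:equiv} already shows that a mixed-sign configuration (some $d_i>x_i$ and some $d_j<x_j$ simultaneously) is never optimal for either problem, which outright eliminates your obstacle configuration; the surviving configurations are exactly the ones your flow argument handles. Two smaller points: (i) your symmetrization over leaf permutations only establishes the conclusion for the symmetrized optimizer, i.e., ``\emph{some} optimal solution has $z_i>0$ for all $i$,'' which is weaker than the lemma as stated (though still sufficient for its use in Proposition \ref{prop:lowerbound}); (ii) the paper needs no symmetrization at all --- its propagation argument, including the separate treatment of the star case $\xi=0$ (excess at the center versus excess at a leaf), applies to an arbitrary optimal solution.
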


\begin{proof}
Recall the adjacency matrix $\mathbf{A}^\xi$ for a star-to-complete network. 
Assume the optimal profit of the mixed-autonomy system is greater than that of the original system. Then there must exist $k$ such that $z_k>0$ and hence $d_k-x_k>0$. Moreover, for any $i\neq k$, $z_i=\sum_{j=1}^n  \alpha_{ji} \max \left\{d_j-x_j , 0\right\} + \sum_{j=1}^n  r_{ji} = \sum_{j\neq k} \alpha_{ji} \max \left\{d_j-x_j , 0\right\}+ \alpha_{ki}(d_k-x_k) + \sum_{j=1}^n r_{ji} > \alpha_{ki}(d_k-x_k)>0$ because $\alpha_{ki}>0$ if $\xi>0$. Therefore, if $\xi>0$, then $z_i>0\quad$ for all $i$.

Further, if $\xi=0$, then $\alpha_{1j} = \frac{1}{n-1}$ and $\alpha_{j1} =1\quad\forall j>1$ while all of the other elements in $A^\xi$ are 0. If $k=1$, then $z_i = \frac{1}{n-1}(d_1-x_1)+ \sum_{j=1}^n  r_{ji} > 0$ for all $i\neq 1$. If $k>1$, then $z_1 = \sum_{j=1}^n \max \left\{d_j-x_j , 0\right\} + \sum_{j=1}^n  r_{j1} = \sum_{j\neq 1,k}\max \left\{d_j-x_j , 0\right\} + (d_k-x_k) +\sum_{j=1}^n  r_{j1}>0$ and thus $z_i >0$ for all $ i$.

Therefore the optimal profit of the mixed-autonomy system is greater than that of the original system only if $z_i >0$ for all $i$.
\end{proof}

Next, we establish a necessary condition required for the platform to find it beneficial to introduce AVs in the network.
\begin{prop}
\label{prop:lowerbound}
  Consider a star-to-complete network, if the optimal profit of the mixed-autonomy system is strictly greater than that of the human-only system, then $k \leq 1-\beta$.
\end{prop}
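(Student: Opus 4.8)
The plan is to combine the conclusion of Lemma~\ref{lem:aut} with the KKT conditions \eqref{eq:12}--\eqref{eq:12-4} and a stationary-distribution weighting that eliminates the multipliers $\boldsymbol{\lambda}$ and $\boldsymbol{\gamma}$. Since by hypothesis the mixed-autonomy profit strictly exceeds the human-only profit, Lemma~\ref{lem:aut} guarantees that the optimal solution of \eqref{eq:7} satisfies $z_i>0$ for every $i$. Complementary slackness then forces equality in \eqref{eq:12-3}, so that $\lambda_i+\gamma_i=-s$ for all $i$. This is the one place where the hypothesis enters, and it is what lets me solve for $\gamma_i=-s-\lambda_i$ and remove $\boldsymbol{\gamma}$ from the remaining conditions. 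Note that the star-to-complete structure is used only through Lemma~\ref{lem:aut}; the rest of the argument needs only Assumption~\ref{assum:1}.

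Substituting $\gamma_i=-s-\lambda_i$ into the stationarity inequality \eqref{eq:12-2}, which holds as $\leq 0$ for every $i$ whether or not $x_i>0$, and rearranging gives, for each $i$,
\begin{equation*}
s \;\leq\; \mu_i-\lambda_i-\beta\sum_{j=1}^n \mu_j\alpha_{ij}+\sum_{j=1}^n\lambda_j\alpha_{ij}.
\end{equation*}
The idea now is to take a convex combination of these inequalities against the stationary distribution $\pi$ of $\mathbf{A}$, i.e.\ the vector with $\pi\geq 0$, $\mathbf{1}^T\pi=1$ and $\pi^T\mathbf{A}=\pi^T$, which exists because $\mathbf{A}$ is row-stochastic and, by Assumption~\ref{assum:1}, defines a strongly connected (hence irreducible) graph. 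Multiplying the displayed inequality by $\pi_i$ and summing over $i$, the identities $\sum_i\pi_i\sum_j\lambda_j\alpha_{ij}=\sum_j\pi_j\lambda_j$ and likewise for $\boldsymbol{\mu}$, both consequences of $\pi^T\mathbf{A}=\pi^T$, cause the $\boldsymbol{\lambda}$ terms to cancel exactly and collapse the $\boldsymbol{\mu}$ terms, leaving $s\leq(1-\beta)\sum_i\pi_i\mu_i$.

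Finally, condition \eqref{eq:12} gives $\mu_i\leq\omega$ for all $i$, so $\sum_i\pi_i\mu_i\leq\omega$ and hence $s\leq(1-\beta)\omega$, which is exactly $k=s/\omega\leq 1-\beta$. The crux of the argument, and the step I expect to require the most care, is the weighting by the stationary distribution: it is what makes the dual variables $\boldsymbol{\lambda}$ (and, after substitution, $\boldsymbol{\gamma}$) drop out, converting a family of per-location inequalities entangled in the multipliers into the single scalar bound I want. I would also be careful to invoke \eqref{eq:12-2} only as an inequality, so that the argument does not secretly assume $x_i>0$ at every location, and to confirm that strong connectivity is genuinely what guarantees a nonnegative $\pi$ with $\pi^T\mathbf{A}=\pi^T$.
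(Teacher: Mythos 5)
Your proof is correct, and it follows the same skeleton as the paper's --- invoke Lemma~\ref{lem:aut} to get $z_i>0$ everywhere, use complementary slackness in \eqref{eq:12-3} to get $\lambda_i+\gamma_i=-s$, take a weighted combination of the stationarity inequalities \eqref{eq:12-2}, and finish with $\mu_i\leq\omega$ from \eqref{eq:12} --- but the key aggregation step is executed in a genuinely different and more general way. The paper works with the explicit entries of $\mathbf{A}^\xi$: it writes out \eqref{eq:12-2} separately for location $1$ and for locations $i\neq 1$, multiplies the first by $(n-1)c_1$, sums the rest with weight $1$, and relies on the identity $c_1+(n-2)c_2=1$ to make the multipliers collapse. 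Your observation is that the ``right'' weights are nothing other than the stationary distribution $\pi$ of the row-stochastic matrix $\mathbf{A}$ (which exists and is nonnegative by irreducibility, i.e.\ strong connectivity from Assumption~\ref{assum:1}): the identity $\pi^T\mathbf{A}=\pi^T$ is exactly what kills the $\boldsymbol{\lambda}$ terms and turns the $\boldsymbol{\mu}$ terms into $(1-\beta)\sum_i\pi_i\mu_i$. Indeed, the paper's weights $\bigl((n-1)c_1,1,\dots,1\bigr)$ are precisely an unnormalized stationary distribution of $\mathbf{A}^\xi$, so your argument explains \emph{why} the paper's ad hoc combination works. What your version buys is generality and transparency: the star-to-complete structure enters only through Lemma~\ref{lem:aut}, so the same computation shows that for \emph{any} strongly connected network in which AV usage at optimality is necessarily network-wide ($z_i>0$ for all $i$), the bound $k\leq 1-\beta$ holds. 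You are also right to invoke \eqref{eq:12-2} only as an inequality (not assuming $x_i>0$), which matches the paper's usage. One cosmetic strengthening: by Perron--Frobenius the stationary distribution of an irreducible stochastic matrix is in fact strictly positive, though nonnegativity with $\mathbf{1}^T\pi=1$ is all your argument needs.
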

\begin{proof}
The optimal profit of the mixed autonomy system will always be no less than that of the human-only system. Therefore, if there is a difference between the two optimal profits, then the optimal profit of the mixed-autonomy system will be strictly greater than that of the  human-only system. Hence by Lemma \ref{lem:aut}, $z_i >0$ for all $i$ under the situation.

Therefore $\lambda_i+\gamma_i=-s$ for all $i$. From \eqref{eq:12-2}, we have
\begin{align}
  \label{eq:14}
\frac{1}{n-1} \beta \sum_{j=2}^{n}\mu_j-\frac{1}{n-1}\sum_{j=2}^{n}\lambda_j-\gamma_1-\mu_1 &\leq 0,
\end{align}
and, for all $i\neq 1$,
\begin{align}
  \label{eq:14-2} c_1(\beta\mu_1-\lambda_1)+c_2 \sum_{j\neq 1,i}^{n}(\beta\mu_j-\lambda_j)-\gamma_i - \mu_i &\leq 0. 
\end{align}

Therefore combining \eqref{eq:14} and \eqref{eq:14-2} gives
\begin{align*}
&(n-1)c_1\left(\frac{1}{n-1} \beta \sum_{j=2}^{n}\mu_j-\frac{1}{n-1}\sum_{j=2}^{n}\lambda_j-\gamma_1-\mu_1\right)\\ 
&+ \sum_{i=1}^{n} \left[ c_1(\beta\mu_1-\lambda_1)+c_2 \sum_{j\neq 1,i}^{n}(\beta\mu_j-\lambda_j)-\gamma_i - \mu_i   \right] \leq 0
\end{align*}
and after recombination of the variables, we have
\begin{align*}
&-c_1(n-1)(\lambda_1+\gamma_1)-\left[c_1+(n-2)c_2 \right]\sum_{j=2}^{n}\lambda_j - \sum_{j=2}^{n}\gamma_j \\
&+ c_1(n-1)(\beta-1)\mu_1+\left[c_1\beta+(n-2)c_2\beta-1 \right]\sum_{j=2}^{n}\mu_j \\
&\leq 0.
\end{align*}
Knowing that $c_1+(n-2)c_2 = 1$ and $\mu_{i}\leq \omega$ for all $i$ from \eqref{eq:12}, we then have 
\begin{align*}
c_1(n-1)s+&(n-1)s \\
&\leq  -c_1(n-1)(\beta-1)\mu_1-(\beta-1)\sum_{j=2}^{n}\mu_j \\
&\leq (c_1+1)(n-1)(1-\beta)\omega\\ 
\implies \frac{s}{\omega} &\leq \frac{(c_1+1)(n-1)(1-\beta)}{(c_1+1)(n-1)} = 1-\beta.
\end{align*}

Notice that the fraction $\frac{s}{\omega}$ is $k$. Therefore, for all star-to-complete networks, the profit of the mixed-autonomy system will be strictly greater than that of the human-only system only if $k \leq 1-\beta$.
\end{proof}

Proposition \ref{prop:lowerbound} establishes that $k \leq 1-\beta$ is a necessary condition for the platform to decide to introduce AVs into its fleet, however, it is not a sufficient condition. In the numerical study of the next section, we observe that the platform will find it beneficial to have no AVs, some AVs, or all AVs for certain regimes of $k$ for each fixed $\beta$.
\section{Numerical study}
\label{sec:numerical-study}
We now provide a case study that illustrates the results developed above and illuminates the potential benefits of mixed-autonomy in ride-sharing.

We consider a simple 3-location star-network satisfying Assumption \ref{assum:2} with location $1$ connected to locations $2$ and $3$, and locations $2$ and $3$ are only connected to location $1$. We see below that even this simple network exhibits interesting properties in the mixed autonomy setting. 
We take the adjacency matrix to be
\begin{align}
  \label{eq:16}
A = \begin{bmatrix}
0 & \frac{1}{2} & \frac{1}{2}\\
1 & 0 & 0 \\
1 & 0 & 0 
\end{bmatrix}
\end{align}
so that riders originating at location $1$ have locations $2$ and $3$ as their destination with equal probability, while riders at locations $2$ and $3$ are all traveling to location $1$; for example, location $1$ is a city center.
 Furthermore, we assume without loss of generality that the driver's outside option is $\omega=1$.  The alternative optimization problem \ref{eq:7}  and the restriction to only human drivers \ref{eq:8} are implemented in CVX, a Matlab-based convex optimization software package.
 
We compute the optimal profits and resulting equilibria for a range of values of $\beta$ and $k$.
In Table \ref{tab:tab1}, the first column lists several values of $\beta$. The fourth column gives $k_t:=1-\beta$, the corresponding theoretical upper bound established in Proposition \ref{prop:lowerbound} such that for $k\geq k_t$, it is optimal for the platform to use humans only. Next, we introduce two particular values of $k$ obtained numerically for which the optimal solution changes qualitatively. The number in the second column, labeled $k_a$, is the value below which no human drivers are present at optimality. That is, for $k<k_a$, $x_i=0$ for all $i$ at optimality and all rides are served by AVs. The third column, labeled $k_s$, is the value of $k$ for which the optimal mass of AVs becomes zero; that is, for $k< k_s$, the optimal profit for the mixed autonomy system is strictly greater than the human-only system. 

From the numerical study , we observe that the optimal profit for the network with AVs is the same as that of the human-only network when $k$ is small, but exceeds the latter when $k$ increases. In addition, as established by Proposition \ref{prop:lowerbound}, $k_s\leq k_t$ for all $\beta$.

\begin{table}[h]
\caption{Theoretical and Observed values of $k$ vs. $\beta$}
\label{k values}
\begin{tabular}{|c|c|c|c|}
    \hline
& $k_a$, it is optimal to& $k_s$, it is optimal to & $k_t=1-\beta$,  \\
	$\beta$    &have only AVs& have some AVs  & upper bound \\
&for $k< k_a$&for $k< k_s$&from Proposition \ref{prop:lowerbound}\\
    \hline
    0.5 & 0.4383 & 0.4992 & 0.5 \\
    \hline
    0.55 & 0.3916 & 0.4417 & 0.45\\
    \hline
    0.6 & 0.3458 & 0.3856 & 0.4\\
    \hline
    0.65 & 0.3008 & 0.3312 & 0.35\\
    \hline
    0.7 & 0.2564 & 0.2786 & 0.3\\
    \hline
    0.75 & 0.2187 & 0.2256 & 0.25\\
    \hline
    0.8 & 0.1799 & 0.18	& 0.2\\
    \hline
    0.85& 0.1387 & 0.1388 & 0.15 \\
    \hline
    0.9 & 0.0947 & 0.095 & 0.1\\
    \hline
    0.95 & 0.0486 & 0.0487 & 0.05\\
    \hline
\end{tabular}
\label{tab:tab1}
\end{table}
Moreover, we make a few additional observations:
$k_s$ is always strictly less than the theoretical upper bound $k_t = 1-\beta$. 
The appearance of human-drivers and the disappearance of the AVs is not concurrent. That is, $k_a<k_s$ so that, for $k_a<k<k_s$, both human drivers and AVs are needed to obtain the optimal profit for the mixed-autonomy platform.


\section{Conclusions}
\label{sec:conclusions}

We proposed a model for ride-sharing systems with mixed-autonomy and showed that equilibrium conditions can be computed efficiently by converting the original problem into an alternative convex program. 
We found that the optimal profit for the ride-sharing platform if  AVs are introduced into the fleet will be the same as that of the human-only network when $k$ is small, \emph{i.e.}, the ratio of  outside option earnings for drivers' lifetime to the cost for operating an AV for the equivalent time of drivers' lifetime is relatively low. In particular, in Proposition \ref{prop:lowerbound}, we showed that if the cost of operating an AV exceeds the expected compensation to a driver in the system, the platform will find it optimal to not use AVs, an intuitively appealing result.

Surprisingly, the case study illustrates that the platform may not necessarily find it optimal to use AVs even when the cost of operating an AV is less than the expected compensation to a driver in the system. Our future research will seek to fully characterize these numerical observations.

In addition, the model proposed and studied here includes a number of simplifying assumptions. For example, in reality, destinations are not equidistant and ride costs might then depend on destination. While these simplifying assumptions are important for illuminating fundamental properties of ride-sharing in a mixed autonomy setting, future work will consider relaxing these assumptions.



\bibliography{./citation/Library} 

\begin{thebibliography}{10}
\providecommand{\url}[1]{#1}
\csname url@samestyle\endcsname
\providecommand{\newblock}{\relax}
\providecommand{\bibinfo}[2]{#2}
\providecommand{\BIBentrySTDinterwordspacing}{\spaceskip=0pt\relax}
\providecommand{\BIBentryALTinterwordstretchfactor}{4}
\providecommand{\BIBentryALTinterwordspacing}{\spaceskip=\fontdimen2\font plus
\BIBentryALTinterwordstretchfactor\fontdimen3\font minus
  \fontdimen4\font\relax}
\providecommand{\BIBforeignlanguage}[2]{{%
\expandafter\ifx\csname l@#1\endcsname\relax
\typeout{** WARNING: IEEEtran.bst: No hyphenation pattern has been}%
\typeout{** loaded for the language `#1'. Using the pattern for}%
\typeout{** the default language instead.}%
\else
\language=\csname l@#1\endcsname
\fi
#2}}
\providecommand{\BIBdecl}{\relax}
\BIBdecl

\bibitem{Mitchell2010}
W.~Mitchell, B.~Hainley, and L.~Burns, \emph{Reinventing the automobile:
  Personal urban mobility for the 21st century}.\hskip 1em plus 0.5em minus
  0.4em\relax MIT press, 2010.

\bibitem{Board2016}
S.~Feigon and C.~Murphy, \emph{Shared Mobility and the Transformation of Public
  Transit}.\hskip 1em plus 0.5em minus 0.4em\relax The National Academies
  Press, 2016, no. Project J-11, Task 21.

\bibitem{Hass-Klau2007}
C.~Hass-Klau, G.~Crampton, and A.~Ferlic, \emph{The effect of public transport
  investment on car ownership: the results for 17 urban areas in France,
  Germany, UK and North America}.\hskip 1em plus 0.5em minus 0.4em\relax
  Environmental \& Transport Planning, 2007.

\bibitem{Javid2016}
R.~Javid, A.~Nejat, and M.~Salari, ``The environmental impacts of carpooling in
  the {U}nited {S}tates,'' in \emph{Transportation, Land and Air Quality
  Conference}, 08 2016.

\bibitem{Mcbain2018}
B.~McBain, M.~Lenzen, G.~Albrecht, and M.~Wackernagel, ``Reducing the
  ecological footprint of urban cars,'' \emph{International Journal of
  Sustainable Transportation}, vol.~12, no.~2, pp. 117--127, 2018.

\bibitem{Litman2017}
T.~Litman, \emph{Autonomous vehicle implementation predictions}.\hskip 1em plus
  0.5em minus 0.4em\relax Victoria Transport Policy Institute Victoria, Canada,
  2017.

\bibitem{Boesch2016}
P.~M. Boesch, F.~Ciari, and K.~W. Axhausen, ``Autonomous vehicle fleet sizes
  required to serve different levels of demand,'' \emph{Transportation Research
  Record}, vol. 2542, no.~1, pp. 111--119, 2016.

\bibitem{Conger:2018ad}
K.~Conger, ``In a shift in driverless strategy, {Uber} deepens its partnership
  with {T}oyota,'' \emph{The New York Times}, Aug 27, 2018.

\bibitem{Bimpikis2018}
\BIBentryALTinterwordspacing
K.~Bimpikis, O.~Candogan, and D.~Saban, ``\BIBforeignlanguage{eng}{Spatial
  pricing in ride-sharing networks},'' \emph{\BIBforeignlanguage{eng}{IDEAS
  Working Paper Series from RePEc}}, 2016. [Online]. Available:
  \url{http://search.proquest.com/docview/2059184495/}
\BIBentrySTDinterwordspacing

\bibitem{Banerjee2015}
S.~Banerjee, R.~Johari, and C.~Riquelme, ``Pricing in ride-sharing platforms: A
  queueing-theoretic approach,'' in \emph{Proceedings of the Sixteenth ACM
  Conference on Economics and Computation}, ser. EC '15.\hskip 1em plus 0.5em
  minus 0.4em\relax New York, NY, USA: ACM, 2015, pp. 639--639.

\bibitem{Cachon2017}
G.~P. Cachon, K.~M. Daniels, and R.~Lobel, ``The role of surge pricing on a
  service platform with self-scheduling capacity,'' \emph{Manufacturing \&
  Service Operations Management}, vol.~19, no.~3, pp. 368--384, 2017.

\bibitem{Banerjee2016}
S.~Banerjee, D.~Freund, and T.~Lykouris, ``Multi-objective pricing for shared
  vehicle systems,'' \emph{arXiv preprint arXiv:1608.06819}, 2016.

\bibitem{Zhang2015}
R.~Zhang, K.~Spieser, E.~Frazzoli, and M.~Pavone, ``Models, algorithms, and
  evaluation for autonomous mobility-on-demand systems,'' in \emph{2015
  American Control Conference (ACC)}.\hskip 1em plus 0.5em minus 0.4em\relax
  IEEE, 2015, pp. 2573--2587.

\bibitem{Rigole2014}
P.-J. Rigole, ``\BIBforeignlanguage{eng}{Study of a shared autonomous vehicles
  based mobility solution in {S}tockholm},'' 2014.

\bibitem{Zhang2015a}
R.~Zhang and M.~Pavone, ``A queueing network approach to the analysis and
  control of mobility-on-demand systems,'' in \emph{2015 American Control
  Conference (ACC)}.\hskip 1em plus 0.5em minus 0.4em\relax IEEE, July 2015,
  pp. 4702--4709.

\bibitem{Zhang2016}
------, ``Control of robotic mobility-on-demand systems: a queueing-theoretical
  perspective,'' \emph{The International Journal of Robotics Research},
  vol.~35, no. 1-3, pp. 186--203, 2016.

\bibitem{Boyd2004}
S.~Boyd and L.~Vandenberghe, \emph{Convex optimization}.\hskip 1em plus 0.5em
  minus 0.4em\relax Cambridge university press, 2004.

\end{thebibliography}
\bibliographystyle{IEEEtran}

\end{document}